\newtheorem{prop}[subsection]{Proposition}
\newtheorem{teor}[subsection]{Theorem}
\newtheorem{cor} [subsection]{Corollary}
\theoremstyle{definition}
\theoremstyle{remark}
\newtheorem{exm} [subsection]{Example}
\newcommand{\paa}{p_{\mathbf a}}
\newcommand{\pad}{p_{\mathbf a,d}}
\newcommand{\pdd}{p_{\mathbf d,d}}
\newcommand{\Pa}{P_{\mathbf a}}
\newcommand{\Pad}{P_{\mathbf a,d}}
\newcommand{\Pdd}{P_{\mathbf d,d}}
\def\pp{\operatorname{p}}
\def\Fl{\operatorname{Fl}}
\def\Gl{\operatorname{Gl}}
\def\lcm{\operatorname{lcm}}
\numberwithin{equation}{section}
\begin{document}

\title[Special restricted partition functions]{Special restricted partition functions for the stable sheaf cohomology on flag varieties}

\author{Mircea Cimpoea\c s}
\address{$^1$ Faculty of Applied Sciences, National University of Science and Technology Politehnica Bucharest,
 Romania and Simion Stoilow Institute of Mathematics, Romania, e-mail: {\tt mircea.cimpoeas@imar.ro}}

\begin{abstract}

Let $\mathbf a:=(a_1,\ldots,a_r)$ be a sequence of positive integers, $d\geq 2$ and $j\geq 1$, some integers.
We study the functions $\pad(n):=$ the number of integer solutions $(x_1,\dots,x_r)$ of
$\sum_{i=1}^r a_ix_i=n$, with $x_i\geq 0$ and $x_i \equiv 0,1(\bmod\;d)$, for all $1\leq i\leq r$,
and $p_{\mathbf a,d}(n;j):=$ the number of $(x_1,\ldots,x_r)$ as above which satisfy also the
condition $\sum_{i=1}^r \left(x_i-(d-2)\left\lfloor \frac{x_i}{d} \right\rfloor\right) =j$.

We give formulas for $\pad(n)$ and its polynomial part $\Pad(n)$, and also for $p_{\mathbf a,d}(n;j)$.
As an application, we compute the dimensions of the stable cohomology groups for certain line bundles associated to flag varieties,
defined over an algebraically closed field of positive characteristic.

\textbf{Keywords:} Integer partition, Restricted partition function, Flag varieties.

\textbf{MSC2020:} 11P81, 11P83, 14M15.

\end{abstract}

\maketitle
\section{Introduction}

Let $\mathbf a := (a_1, a_2, \ldots , a_r)$ be a sequence of positive integers, where $r \geq 2$. The \emph{restricted partition
function} associated to $\mathbf a$ is $\paa : \mathbb N \to \mathbb N$, $\paa(n) :=$ the number of integer solutions $(x_1, \ldots, x_r)$
of $\sum_{i=1}^r a_ix_i = n$ with $x_i \geq 0$. Note that the generating function of $\paa(n)$ is
\begin{equation}\label{gen}
\sum_{n=0}^{\infty}\paa(n)z^n= \frac{1}{(1-z^{a_1})\cdots(1-z^{a_r})},\;|z|<1.
\end{equation}
This function was extensively studied in literature, but it received a renew attention in the last years; 
see for instance \cite{lucrare, graj, liu}, just to mention a few. 

Let $D$ be a common multiple of $a_1,\ldots,a_r$.
We recall the following result:

\begin{teor}(\cite[Corollary 2.10]{lucrare})\label{formula}
We have that:
$$ \paa(n) = \frac{1}{(r-1)!} \sum_{\substack{0\leq j_1\leq \frac{D}{a_1}-1,\ldots, 0\leq j_r\leq \frac{D}{a_r}-1 \\ 
a_1j_1+\cdots+a_rj_r \equiv n (\bmod D)}} \prod_{\ell=1}^{r-1} \left(\frac{n-a_1j_{1}- \cdots -a_rj_r}{D}+\ell \right).$$
\end{teor}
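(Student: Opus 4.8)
The plan is to read off the coefficient of $z^n$ in the generating function \eqref{gen} after putting it over the common denominator $(1-z^D)^r$. The one ingredient needed is the elementary identity
$$1-z^D=(1-z^{a_i})\bigl(1+z^{a_i}+z^{2a_i}+\cdots+z^{D-a_i}\bigr)=(1-z^{a_i})\sum_{j_i=0}^{D/a_i-1}z^{a_ij_i},$$
which holds precisely because $a_i\mid D$ for every $i$. Substituting $1/(1-z^{a_i})=\bigl(\sum_{j_i=0}^{D/a_i-1}z^{a_ij_i}\bigr)/(1-z^D)$ into \eqref{gen} gives
$$\sum_{n\ge 0}\paa(n)z^n=\frac{1}{(1-z^D)^r}\prod_{i=1}^r\Bigl(\sum_{j_i=0}^{D/a_i-1}z^{a_ij_i}\Bigr)=\frac{1}{(1-z^D)^r}\sum_{0\le j_i\le D/a_i-1}z^{\,a_1j_1+\cdots+a_rj_r}.$$

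Next I would expand the remaining denominator by the negative binomial series $\frac{1}{(1-z^D)^r}=\sum_{k\ge 0}\binom{k+r-1}{r-1}z^{Dk}$ and extract the coefficient of $z^n$. Writing $S:=a_1j_1+\cdots+a_rj_r$, a tuple $(j_1,\dots,j_r)$ in the indicated box contributes to $\paa(n)$ exactly when $S\le n$ and $S\equiv n\pmod{D}$, and then its contribution is $\binom{(n-S)/D+r-1}{r-1}$. Using $\binom{k+r-1}{r-1}=\frac{1}{(r-1)!}\prod_{\ell=1}^{r-1}(k+\ell)$ with $k=(n-S)/D$ turns this into the right-hand side of the claimed formula, except that the statement sums over all tuples with $S\equiv n\pmod D$, without imposing $S\le n$.

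Reconciling those two ranges is the only real (and mild) point: the main obstacle is showing that a tuple with $S>n$ and $S\equiv n\pmod{D}$ contributes zero, so that it may be added to the sum harmlessly. For such a tuple $k=(n-S)/D$ is a negative integer, and from $S\le\sum_{i=1}^r a_i(D/a_i-1)=rD-\sum_i a_i<rD$ together with $n\ge 0$ we get $0<S-n<rD$, hence $S-n\in\{D,2D,\dots,(r-1)D\}$, i.e.\ $-k\in\{1,\dots,r-1\}$; therefore one of the factors $\bigl(\frac{n-S}{D}+\ell\bigr)$ with $1\le\ell\le r-1$ vanishes and $\prod_{\ell=1}^{r-1}\bigl(\frac{n-S}{D}+\ell\bigr)=0$. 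This completes the identification. As a sanity check I would verify $\paa(0)=1$ and a couple of small values of $n$ to make sure the normalisation of the binomial coefficient is correctly placed.
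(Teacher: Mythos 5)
Your argument is correct and complete. The paper itself offers no proof of this statement --- it is quoted verbatim from \cite[Corollary 2.10]{lucrare} --- so there is no internal proof to compare against; your derivation stands on its own as a self-contained justification. The generating-function route you take (rewriting $1/(1-z^{a_i})$ as $\bigl(\sum_{j_i=0}^{D/a_i-1}z^{a_ij_i}\bigr)/(1-z^D)$, expanding $(1-z^D)^{-r}$ by the negative binomial series, and reading off the coefficient of $z^n$) is the standard one, and you correctly identify and dispose of the only delicate point: a tuple with $S\equiv n\pmod D$ but $S>n$ satisfies $0<S-n<rD$, so $(n-S)/D\in\{-1,\dots,-(r-1)\}$ and one factor of $\prod_{\ell=1}^{r-1}\bigl(\frac{n-S}{D}+\ell\bigr)$ vanishes, which is exactly why the congruence-only summation in the stated formula is legitimate. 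No gaps.
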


Bell \cite{bell} proved that $\paa(n)$ is a quasi-polynomial of degree $r-1$, with the period $D$, i.e.
$$
\paa(n)=d_{\mathbf a,r-1}(n)n^{r-1}+\cdots+d_{\mathbf a,1}(n)n+d_{\mathbf a,0}(n), 
$$
where $d_{\mathbf a,m}(n+D)=d_{\mathbf a,m}(n)$, for $0\leq m\leq r-1$ and $n\geq 0$, and $d_{\mathbf a,r-1}(n)$ is
not identically zero. Sylvester \cite{sylvester,sylvesterc} decomposed the restricted partition function in a sum of ``waves'',  
$
\paa(n)=\sum_{j\geq 1} W_{j}(n,\mathbf a), 
$
where the sum is taken over all distinct divisors $j$ of the components of $\mathbf a$ and showed that for each such $j$, 
$W_j(n,\mathbf a)$ is the coefficient of $t^{-1}$ in
$$ \sum_{0 \leq \nu <j,\; \gcd(\nu,j)=1 } \frac{\rho_j^{-\nu n} e^{nt}}{(1-\rho_j^{\nu a_1}e^{-a_1t})\cdots (1-\rho_j^{\nu a_k}e^{-a_kt}) },$$
where $\rho_j=e^{\frac{2\pi i}{j}}$ and $\gcd(0,0)=1$ by convention. Note that $W_{j}(n,\mathbf a)$'s are quasi-polynomials of period $j$.
Also, $W_1(n,\mathbf a)$ is called the \emph{polynomial part} of $\paa(n)$ and it is denoted by $\Pa(n)$.
We recall the following result:

\begin{teor}(\cite[Corollary 3.6]{lucrare})\label{Pan}
For the polynomial part $\Pa(n)$ of the quasi-polynomial $\paa(n)$ we have
$$ \Pa(n) = \frac{1}{D(r-1)!} \sum_{0\leq j_1\leq \frac{D}{a_1}-1,\ldots, 0\leq j_r\leq \frac{D}{a_r}-1} 
\prod_{\ell=1}^{r-1} \left(\frac{n-a_1j_{1}- \cdots -a_rj_r}{D}+\ell \right).$$
\end{teor}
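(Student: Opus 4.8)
The plan is to extract $\Pa(n)$ directly from Sylvester's wave decomposition recalled above. For $j=1$ the only admissible index is $\nu=0$, and $\rho_1=1$, so $\Pa(n)=W_1(n,\mathbf a)$ is the coefficient of $t^{-1}$ in $\frac{e^{nt}}{(1-e^{-a_1t})\cdots(1-e^{-a_rt})}$. Since $D$ is a common multiple of $a_1,\dots,a_r$, every quotient $D/a_i$ is a positive integer and the factorization $1-e^{-Dt}=(1-e^{-a_it})\bigl(1+e^{-a_it}+\cdots+e^{-(D/a_i-1)a_it}\bigr)$ yields $\frac{1}{1-e^{-a_it}}=\frac{1}{1-e^{-Dt}}\sum_{j_i=0}^{D/a_i-1}e^{-a_ij_it}$. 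Multiplying these identities over $i=1,\dots,r$ and by $e^{nt}$ gives
\[
\frac{e^{nt}}{(1-e^{-a_1t})\cdots(1-e^{-a_rt})}=\frac{1}{(1-e^{-Dt})^r}\sum_{0\le j_i\le D/a_i-1}e^{(n-a_1j_1-\cdots-a_rj_r)t}.
\]
As taking the coefficient of $t^{-1}$ is linear and the sum is finite, the theorem reduces to the claim that, for any constant $c$, the coefficient of $t^{-1}$ in $e^{ct}/(1-e^{-Dt})^r$ equals $\frac{1}{D(r-1)!}\prod_{\ell=1}^{r-1}\bigl(\frac{c}{D}+\ell\bigr)$; we then specialize $c=n-a_1j_1-\cdots-a_rj_r$ and sum over all $(j_1,\dots,j_r)$ in the indicated range.

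For this key computation I would interpret the coefficient of $t^{-1}$ as $\operatorname{Res}_{t=0}$ and perform two changes of variable. First, $s=Dt$ gives $\operatorname{Res}_{t=0}\frac{e^{ct}}{(1-e^{-Dt})^r}=\frac1D\operatorname{Res}_{s=0}\frac{e^{(c/D)s}}{(1-e^{-s})^r}$. Next, $w=1-e^{-s}$, a local analytic change of variable near $0$ with $ds=\frac{dw}{1-w}$ and $e^{(c/D)s}=(1-w)^{-c/D}$, transforms the residue into
\[
\operatorname{Res}_{s=0}\frac{e^{(c/D)s}}{(1-e^{-s})^r}=\operatorname{Res}_{w=0}\frac{(1-w)^{-c/D-1}}{w^r},
\]
which is the coefficient of $w^{r-1}$ in $(1-w)^{-c/D-1}$, namely $\binom{c/D+r-1}{r-1}=\frac{1}{(r-1)!}\prod_{\ell=1}^{r-1}\bigl(\frac{c}{D}+\ell\bigr)$, this last expression being read as a polynomial in $c$ (note that $c/D$ need not be an integer). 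Hence the coefficient of $t^{-1}$ in $e^{ct}/(1-e^{-Dt})^r$ is $\frac{1}{D(r-1)!}\prod_{\ell=1}^{r-1}\bigl(\frac{c}{D}+\ell\bigr)$, proving the claim.

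Combining the two steps gives $\Pa(n)=\frac{1}{D(r-1)!}\sum_{0\le j_i\le D/a_i-1}\prod_{\ell=1}^{r-1}\bigl(\frac{n-a_1j_1-\cdots-a_rj_r}{D}+\ell\bigr)$, which is exactly the asserted formula and is visibly a polynomial in $n$ of degree $r-1$; since it agrees with the values $W_1(n,\mathbf a)$ for all $n\ge 0$, it is indeed the polynomial part of $\paa$. The points I expect to require care are the bookkeeping ones — checking that the finite geometric expansion of $1/(1-e^{-a_it})$ is valid precisely because $D/a_i\in\mathbb Z$, and justifying the two residue substitutions — while the residue itself collapses cleanly under $w=1-e^{-s}$. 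One could equally avoid Sylvester's formula: writing $\prod_i\frac{1}{1-z^{a_i}}=\frac{1}{(1-z^D)^r}\sum_{\mathbf j}z^{a_1j_1+\cdots+a_rj_r}$ from \eqref{gen} and using $\frac{1}{(1-z^D)^r}=\sum_{m\ge0}\binom{m+r-1}{r-1}z^{Dm}$, one isolates the pole at $z=1$ of the generating function and runs the identical residue computation after $z=e^{-t}$.
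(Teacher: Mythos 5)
This theorem is quoted in the paper from \cite[Corollary 3.6]{lucrare} without proof, so there is no internal argument to compare against; your derivation is an independent one, and it is correct. You identify $\Pa(n)=W_1(n,\mathbf a)$ from Sylvester's decomposition (the $j=1$ wave, with $\nu=0$ and $\rho_1=1$), clear each factor to the common denominator $(1-e^{-Dt})^r$ via the finite geometric sum $\frac{1}{1-e^{-a_it}}=\frac{1}{1-e^{-Dt}}\sum_{j_i=0}^{D/a_i-1}e^{-a_ij_it}$ (valid precisely because $D/a_i\in\mathbb Z$), and reduce everything to the residue of $e^{ct}/(1-e^{-Dt})^r$ at $t=0$. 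The two substitutions are legitimate: $s=Dt$ produces the factor $1/D$, and $w=1-e^{-s}$ is a local biholomorphism at the origin under which the residue of the differential form is invariant, giving the coefficient of $w^{r-1}$ in $(1-w)^{-c/D-1}$, i.e. $\binom{c/D+r-1}{r-1}=\frac{1}{(r-1)!}\prod_{\ell=1}^{r-1}\left(\frac{c}{D}+\ell\right)$, read via the generalized binomial series since $c/D$ need not be an integer. Specializing $c=n-a_1j_1-\cdots-a_rj_r$ and summing over the box of indices yields exactly the stated formula. By contrast, the cited source obtains this formula from the machinery behind Theorem \ref{formula} (their explicit expression for $\paa(n)$ via the Barnes zeta function), essentially by averaging the quasi-polynomial constituents over a full period $D$ — which is why Theorem \ref{Pan} is visibly Theorem \ref{formula} with the congruence condition dropped and a factor $1/D$ inserted. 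Your residue route makes the origin of the $1/D$ and of the rising factorial completely transparent, at the price of taking Sylvester's identification of $W_1$ with the polynomial part as given; the closing remark that one could instead isolate the pole at $z=1$ of the generating function \eqref{gen} is a sound sketch of how to avoid even that input.
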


The aim of our paper is to study a modified version of the restricted partition function. Given 
$d\geq 2$ an integer, we define the function 
$\pad : \mathbb N \to \mathbb N$, $\pad(n) :=$ the number of integer solutions $(x_1, \ldots, x_r)$
of $\sum_{i=1}^r a_ix_i = n$, with $x_i \geq 0$ and $x_i\equiv 0,1(\bmod\;d)$.

In Proposition \ref{p2} we prove that
$\pp_{\mathbf a,d}(n)=\sum\limits_{J\subset [r]}p_{d\mathbf a}(n-\sum_{i\in J}a_i),$
where $[r]=\{1,2,\ldots,r\}$ and $d\mathbf a = (da_1,da_2,\ldots,da_r)$. Using Theorem \ref{formula} and Theorem \ref{Pan}, in
Theorem \ref{t1} we give formulas for $\pad(n)$ and its polynomial part, $\Pad(n)$. In particular, in Corollary \ref{c1}, we
give formulas for $\pdd(n)$ and $\Pdd(n)$, where $\mathbf d=(1,2,\ldots,d^k)$ and $1\leq k\leq \log_d n$. See also Example \ref{exm1}.

We also define $p_{\mathbf a,d}(n;j)$ to be the number of integer solutions $(x_1,\ldots,x_r)$ to $\sum\limits_{i=1}^r a_ix_i =n$, 
$\sum\limits_{i=1}^r \left(x_i-(d-2)\left\lfloor \frac{x_i}{d} \right\rfloor\right) =j$ and $x_i\geq 0,\;x_i\equiv 0,1(\bmod\;d)$, for all 
$1\leq i\leq r$. In particular, we denote $p_{\mathbf a}(n;j):=p_{\mathbf a,2}(n,j)$.
In Proposition \ref{proco}, we give a formula for $p_{\mathbf a}(n;j)$.
More generally, in Theorem \ref{t11}, we give a formula for $p_{\mathbf a,d}(n,j)$. In particular, in Corollary \ref{c11}, we deduce a 
formula for $p_{\mathbf d,d}(n,j)$, where $\mathbf d=(1,2,\ldots,d^k)$ and $1\leq k\leq \log_d n$.

In Section 3, we apply our main results in order to compute the dimensions of the stable cohomology groups associated to some line
bundles over flag varieties; see Theorem \ref{t2}, Example \ref{exm2}, Theorem \ref{t3} and Example \ref{exm3}. 
For further details on the topic of flag varieties, we refer the reader to \cite{brion, raicu}.

\section{Main results}

Let $r\geq 2$ and $d\geq 2$ be two integers. Let $\mathbf a=(a_1,\ldots,a_r)$ be a sequence of positive integers.
Let $D(d)$ be the least common multiple of $da_1,\ldots,da_r$.

We consider the function $\pad: \mathbb N \to \mathbb N$, given by 
\begin{equation}\label{d1}
\pad(n) := \left|  \left\{(x_1,\ldots,x_r)\;:\;\sum_{i=1}^r a_ix_i = n,\text{ with } x_i \geq 0\text{ and }x_i\equiv 0,1(\bmod\;d)\right\} \right|.
\end{equation}
Note that $\pp_{\mathbf a,2}(n)=\paa(n)$ for all $n\geq 1$. Also, if $d_1\mid d_2$, then $\pp_{\mathbf a,d_1}(n)\geq \pp_{\mathbf a,d_2}(n)$,
for all $n\geq 1$.

\begin{prop}\label{p1}
The generating function of $\pad(n)$ is
$$ \sum_{n=0}^{\infty}\pad(n)z^n= \frac{(1+z^{a_1})\cdots (1+z^{a_r})}{(1-z^{da_1})\cdots(1-z^{da_r})},\;|z|<1.$$
\end{prop}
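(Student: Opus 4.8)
The plan is to read off the generating function directly from the combinatorial description \eqref{d1}, exactly as one does for the ordinary restricted partition function in \eqref{gen}. Since $\pad(n)$ counts tuples $(x_1,\dots,x_r)$ with $\sum_i a_i x_i = n$ subject to independent constraints on each coordinate (namely $x_i\ge 0$ and $x_i\equiv 0,1\ (\bmod\ d)$), the formal power series $\sum_{n\ge 0}\pad(n)z^n$ factors as a product over $i$ of the one‑variable series in which the $i$-th coordinate is allowed to range over its admissible set. Concretely, if $S_d:=\{x\in\mathbb Z_{\ge 0}: x\equiv 0,1\ (\bmod\ d)\}$, then
$$ \sum_{n=0}^\infty \pad(n)z^n \;=\; \prod_{i=1}^r\Bigl(\sum_{x\in S_d} z^{a_i x}\Bigr), $$
the point being that expanding the right-hand side and collecting the coefficient of $z^n$ reproduces exactly the count in \eqref{d1}. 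This reduction is the only step that needs to be stated carefully; everything after it is a geometric-series computation.

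Next I would compute the single factor. The set $S_d$ is the disjoint union $d\mathbb Z_{\ge 0}\sqcup(d\mathbb Z_{\ge 0}+1)$, so for $|z|<1$,
$$ \sum_{x\in S_d} z^{a_i x} \;=\; \sum_{k\ge 0} z^{a_i\,dk} + \sum_{k\ge 0} z^{a_i(dk+1)} \;=\; \frac{1}{1-z^{da_i}} + \frac{z^{a_i}}{1-z^{da_i}} \;=\; \frac{1+z^{a_i}}{1-z^{da_i}}. $$
Substituting this into the product over $i=1,\dots,r$ gives the asserted formula $\dfrac{(1+z^{a_1})\cdots(1+z^{a_r})}{(1-z^{da_1})\cdots(1-z^{da_r})}$, and the series converges absolutely for $|z|<1$ since each factor does.

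I do not expect a genuine obstacle here: the proposition is essentially a bookkeeping statement about generating functions. The mild technical point to be explicit about is the justification that the product of the one‑variable series equals $\sum_n \pad(n) z^n$, i.e.\ that the Cauchy product of the $r$ absolutely convergent series has as its $z^n$-coefficient the number of admissible solutions of $\sum_i a_i x_i = n$; this is immediate because all coefficients are nonnegative and the rearrangement is legitimate on $|z|<1$. (As a sanity check, one could also derive the same formula a posteriori by combining Proposition~\ref{p2} with \eqref{gen} and the identity $\sum_{J\subseteq[r]}\prod_{i\in J}z^{a_i}=\prod_{i=1}^r(1+z^{a_i})$, which recovers the numerator; but the direct argument above is self-contained and does not rely on later results.)
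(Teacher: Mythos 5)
Your argument is correct and coincides with the paper's own proof: the paper likewise writes $\sum_n \pad(n)z^n$ as the product over $i$ of the series $1+z^{a_i}+z^{da_i}+z^{da_i+1}+\cdots$ indexed by the admissible residues, and sums each factor as $(1+z^{a_i})/(1-z^{da_i})$. Your version just spells out the Cauchy-product justification a bit more explicitly; no substantive difference.
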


\begin{proof}
It is easy to see that $\pad(n)$ equals to the coefficient of $z^n$ in the power series 
$$\prod_{i=1}^r (1+z^{a_i}+z^{da_i}+z^{da_i+1}+z^{2da_i}+z^{(2d+1)a_i}+\cdots).$$
\normalsize On the other hand, for all $1\leq i\leq r$ and $|z|<1$, we have that
$$1+z^{a_i}+z^{da_i}+z^{da_i+1}+z^{2da_i}+z^{(2d+1)a_i}+\cdots = (1+z^{a_i})\sum_{j\geq 0} z^{da_ij} = \frac{1+z^{a_i}}{1-z^{da_i}}.$$
Hence, we get the required conclusion.
\end{proof}

We denote $[r]:=\{1,2,\ldots,r\}$. For any subset $J\subset [r]$, we let $a_J:=\sum_{i\in J}a_i$.
Note that $a_{\emptyset}=0$ and $a_{\{i\}}=a_i$, for all $1\leq i\leq r$.

\begin{prop}\label{p2}
We have that $$\pad(n)=\sum\limits_{J\subset [r]}p_{d\mathbf a}(n-a_J),$$
where $d\mathbf a = (da_1,da_2,\ldots,da_r)$. In particular, the polynomial part of $\pad(n)$ is
$$\Pad(n)=\sum\limits_{J\subset [r]}P_{d\mathbf a}(n-a_J).$$
\end{prop}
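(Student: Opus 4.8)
The plan is to expand the generating function from Proposition \ref{p1} and match it term-by-term with a sum of shifted generating functions for the classical restricted partition function associated to the dilated sequence $d\mathbf{a}$. Recall from \eqref{gen} that $\sum_{n\geq 0} p_{d\mathbf a}(n)z^n = \prod_{i=1}^r (1-z^{da_i})^{-1}$, so that for any integer $c\geq 0$ we have $\sum_{n\geq 0} p_{d\mathbf a}(n-c)z^n = z^c\prod_{i=1}^r (1-z^{da_i})^{-1}$, where $p_{d\mathbf a}(m):=0$ for $m<0$.

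First I would write the numerator $(1+z^{a_1})\cdots(1+z^{a_r})$ in Proposition \ref{p1} as $\sum_{J\subset[r]}z^{a_J}$, since expanding the product amounts to choosing, for each index $i$, either the term $1$ or the term $z^{a_i}$; the subset $J$ records the indices for which $z^{a_i}$ was chosen, and the resulting monomial is $z^{\sum_{i\in J}a_i}=z^{a_J}$. Substituting this into the formula of Proposition \ref{p1} gives
$$\sum_{n=0}^{\infty}\pad(n)z^n = \frac{\sum_{J\subset[r]}z^{a_J}}{(1-z^{da_1})\cdots(1-z^{da_r})} = \sum_{J\subset[r]}\frac{z^{a_J}}{(1-z^{da_1})\cdots(1-z^{da_r})} = \sum_{J\subset[r]}\sum_{n=0}^{\infty}p_{d\mathbf a}(n-a_J)z^n.$$
Comparing coefficients of $z^n$ on both sides yields $\pad(n)=\sum_{J\subset[r]}p_{d\mathbf a}(n-a_J)$, which is the first assertion.

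For the statement about the polynomial part, I would invoke the additivity of Sylvester's wave decomposition: the polynomial part $P_{\mathbf a}$ is the wave $W_1(n,\mathbf a)$, and since shifting $n\mapsto n-a_J$ by a constant integer sends a quasi-polynomial of period $j$ to another quasi-polynomial of period $j$ (indeed, it sends $W_1$ to $W_1$ because it does not alter which $W_j$ a term belongs to), the polynomial part of a finite sum of shifts $p_{d\mathbf a}(n-a_J)$ is the corresponding sum of shifts of the polynomial part $P_{d\mathbf a}$. Concretely, $\pad(n)$ and $\sum_{J}p_{d\mathbf a}(n-a_J)$ agree as quasi-polynomials (for $n$ large, or via the standard convention identifying a restricted partition function with its quasi-polynomial), so their $W_1$-components agree; and $W_1$ of the right-hand side is $\sum_{J}P_{d\mathbf a}(n-a_J)$ by linearity of the wave operator and the fact that $P_{d\mathbf a}(n-a_J)$ is the polynomial part of $p_{d\mathbf a}(n-a_J)$.

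The only genuinely delicate point is the last one: one must be careful that the decomposition $\paa = \sum_{j} W_j$ is compatible with integer shifts, i.e. that $W_1\bigl(p_{d\mathbf a}(\cdot - a_J)\bigr)(n) = P_{d\mathbf a}(n-a_J)$ rather than some other wave. This follows because in Sylvester's formula the wave $W_j$ is singled out by the primitive $j$-th roots of unity $\rho_j^\nu$, and the substitution $n\mapsto n-a_J$ only multiplies the $j$-th wave's summand by the scalar $\rho_j^{\nu a_J}$, keeping it within the $j$-th wave; the $j=1$ piece thus remains the $j=1$ piece. Everything else in the argument is the routine generating-function bookkeeping sketched above. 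I would therefore present the first identity via the coefficient comparison and then deduce the polynomial-part identity by applying the polynomial-part operator (equivalently, taking $W_1$) to both sides of $\pad(n)=\sum_{J\subset[r]}p_{d\mathbf a}(n-a_J)$.
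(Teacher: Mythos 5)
Your proposal is correct and follows essentially the same route as the paper: expanding the numerator of the generating function in Proposition \ref{p1} as $\sum_{J\subset[r]}z^{a_J}$ and comparing coefficients via \eqref{gen}. Your extra care in justifying the polynomial-part assertion through the shift-invariance of Sylvester's wave $W_1$ is a welcome elaboration of what the paper dismisses as ``immediate,'' but it is not a different argument.
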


\begin{proof}
From Proposition \ref{p1} it follows that 
$$ \sum_{n=0}^{\infty}\pad(n)z^n= \sum\limits_{J\subset [r]} \frac{z^{a_J}}{(1-z^{da_1})\cdots(1-z^{da_r})},\text{ for all }|z|<1.$$
Now, the formula for $\pad(n)$ follows from \eqref{gen}. The last assertion is immediate.
\end{proof}

\begin{teor}\label{t1}
With the above notations, we have that
$$\pad(n) = \frac{1}{(r-1)!} \sum\limits_{\varepsilon \in \{0,1\}^r}
  \sum_{\substack{0\leq j_i \leq \frac{D(d)}{da_i}-1,\; 1\leq i\leq r, 
	     \\ \sum\limits_{i=1}^r a_i(dj_i+\varepsilon_i) \equiv n (\bmod D(d))}} 			
			 \prod_{\ell=1}^{r-1} \left(\frac{n-\sum\limits_{i=1}^r a_i(dj_i+\varepsilon_i)}{D(d)}+\ell \right).
 $$
Moreover, the polynomial part of $\pad(n)$ is
$$\Pad(n) = \frac{1}{D(d)(r-1)!} \sum\limits_{\varepsilon \in \{0,1\}^r,\;
                                 0\leq j_i \leq \frac{D(d)}{da_i}-1,\; 1\leq i\leq r,}	     
			 \prod_{\ell=1}^{r-1} \left(\frac{n-\sum\limits_{i=1}^r a_i(dj_i+\varepsilon_i)}{D(d)}+\ell \right).
 $$

\end{teor}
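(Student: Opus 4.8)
The plan is to combine Proposition \ref{p2} with the two known formulas (Theorem \ref{formula} and Theorem \ref{Pan}), applied to the sequence $d\mathbf a=(da_1,\ldots,da_r)$. First I would observe that $D(d)=\lcm(da_1,\ldots,da_r)$ is precisely a common multiple of the entries of $d\mathbf a$, so Theorem \ref{formula} applies verbatim with $D$ replaced by $D(d)$ and $\mathbf a$ replaced by $d\mathbf a$; this gives, for each subset $J\subset[r]$,
$$ p_{d\mathbf a}(n-a_J) = \frac{1}{(r-1)!} \sum_{\substack{0\leq j_i\leq \frac{D(d)}{da_i}-1,\;1\leq i\leq r \\ \sum_{i=1}^r da_ij_i \equiv n-a_J\,(\bmod\,D(d))}} \prod_{\ell=1}^{r-1}\left(\frac{(n-a_J)-\sum_{i=1}^r da_ij_i}{D(d)}+\ell\right). $$

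Next I would encode the subset sum $a_J$ by an indicator vector: writing $\varepsilon=(\varepsilon_1,\ldots,\varepsilon_r)\in\{0,1\}^r$ for the characteristic vector of $J$, we have $a_J=\sum_{i=1}^r a_i\varepsilon_i$, and the correspondence $J\leftrightarrow\varepsilon$ is a bijection between subsets of $[r]$ and $\{0,1\}^r$. Substituting this into Proposition \ref{p2} and the displayed formula above, the congruence $\sum_{i=1}^r da_ij_i\equiv n-\sum_i a_i\varepsilon_i$ becomes $\sum_{i=1}^r a_i(dj_i+\varepsilon_i)\equiv n\,(\bmod\,D(d))$, and the argument of each linear factor becomes $\frac{n-\sum_{i=1}^r a_i(dj_i+\varepsilon_i)}{D(d)}+\ell$. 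Collecting the double sum over $\varepsilon\in\{0,1\}^r$ and over the admissible $j_i$ then yields exactly the asserted formula for $\pad(n)$.

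For the polynomial-part statement I would argue identically, starting from the second assertion of Proposition \ref{p2}, namely $\Pad(n)=\sum_{J\subset[r]}P_{d\mathbf a}(n-a_J)$, and applying Theorem \ref{Pan} to $d\mathbf a$ with common multiple $D(d)$. The only point needing a word of justification is that the polynomial part of the shifted quasi-polynomial $p_{d\mathbf a}(n-a_J)$ is $P_{d\mathbf a}(n-a_J)$ — i.e. that taking the Sylvester wave $W_1$ commutes with an integer shift of the argument — but this is immediate since $W_1$ is characterized as the polynomial obtained by dropping the contributions of the nontrivial roots of unity, and a shift $n\mapsto n-a_J$ preserves this decomposition term by term (equivalently, it is already implicit in the ``last assertion is immediate'' remark in the proof of Proposition \ref{p2}). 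After the same relabeling $J\leftrightarrow\varepsilon$, the sum over $J$ and the (now unconstrained) sum over the $j_i$ merge into the single summation $\sum_{\varepsilon\in\{0,1\}^r,\;0\leq j_i\leq \frac{D(d)}{da_i}-1}$, giving the stated closed form for $\Pad(n)$.

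I do not anticipate a genuine obstacle here: the proof is essentially a bookkeeping exercise, and the main thing to get right is the index manipulation — verifying that the substitution $\varepsilon_i\in\{0,1\}$ correctly reproduces the sum over all subsets $J$, that the congruence conditions transform as claimed, and that the ranges $0\leq j_i\leq \frac{D(d)}{da_i}-1$ coming from Theorem \ref{formula}/Theorem \ref{Pan} applied to $d\mathbf a$ are exactly those appearing in the statement. The mild subtlety worth flagging explicitly is the commutation of ``polynomial part'' with integer translation, which I would state as a one-line lemma or simply invoke in passing.
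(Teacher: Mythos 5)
Your proposal is correct and follows exactly the paper's own argument: apply Proposition \ref{p2}, then Theorem \ref{formula} (resp.\ Theorem \ref{Pan}) to the sequence $d\mathbf a$ with common multiple $D(d)$, and use the bijection between subsets $J\subset[r]$ and indicator vectors $\varepsilon\in\{0,1\}^r$. The shift-commutation point you flag for the polynomial part is the same observation the paper disposes of as ``the last assertion is immediate'' in Proposition \ref{p2}.
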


\begin{proof}
From Proposition \ref{p2} and Theorem \ref{formula}, it follows that 
\begin{align*}
\pad(n)=\sum\limits_{J\subset [r]}p_{d\mathbf a}(n-a_J) = & \frac{1}{(r-1)!} \sum\limits_{J\subset [r]}
\sum_{\substack{0\leq j_1\leq \frac{D(d)}{da_1}-1,\ldots, 0\leq j_r\leq \frac{D(d)}{da_r}-1 \\ da_1j_1+\cdots+da_rj_r \equiv n-a_J (\bmod D(d))}} \times \\
& \times \prod_{\ell=1}^{r-1} \left(\frac{n-a_J-da_1j_{1}- \cdots -da_rj_r}{D(d)}+\ell \right).
\end{align*}
Using the $1$-to-$1$ correspondence between the subsets $J\subset [r]$ and the vectors $\epsilon \in \{0,1\}^r$,
we get the required result for $\pad(n)$. The formula for $\Pad(n)$ is obtained similarly, using Proposition \ref{p2} and Theorem \ref{Pan}.
\end{proof}

Let $n\geq 1$ and $\mathbf d=(1,d,\ldots,d^k)$, where $0\leq k\leq \left\lfloor \log_d(n)\right\rfloor$ is fixed.
From Theorem \ref{t1}, we deduce:

\begin{cor}\label{c1}
With the above notations, we have that
\begin{align*}
\pdd(n) = \frac{1}{k!} 
  \sum_{\substack{\varepsilon \in \{0,1\}^{k+1},\; 0\leq j_i \leq d^{k-i}-1,\; 0\leq i\leq k-1,
	     \\ \sum\limits_{i=0}^{k-1} d^{i+1}j_i + \sum\limits_{i=0}^{k} d^{i} \varepsilon_i  \equiv n (\bmod d^{k+1})}} 
			  \prod_{\ell=1}^{k} \left(\frac{n-\sum\limits_{i=0}^{k-1} d^{i+1}j_i - \sum\limits_{i=0}^{k} d^{i}\varepsilon_i}{d^{k+1}}+\ell \right),
\end{align*}
where $\varepsilon=(\varepsilon_0,\ldots,\varepsilon_k)$.
Moreover, the polynomial part of $\pad(n)$ is
$$\Pdd(n)=\frac{1}{k!d^{k+1}}\sum_{\varepsilon \in \{0,1\}^{k+1},\; 0\leq j_i \leq d^{k-i}-1,\; 0\leq i\leq k-1} 
			  \prod_{\ell=1}^{k} \left(\frac{n-\sum\limits_{i=0}^{k-1} d^{i+1}j_i - \sum\limits_{i=0}^{k} d^{i}\varepsilon_i}{d^{k+1}}+\ell \right).$$
\end{cor}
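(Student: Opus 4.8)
The plan is to specialize Theorem \ref{t1} to the sequence $\mathbf a=\mathbf d=(d^0,d^1,\ldots,d^k)$, so that $r=k+1$ and $a_i=d^i$ for $0\le i\le k$ (note the components are now indexed from $0$ to $k$). First I would compute the period:
$$D(d)=\lcm(da_0,\ldots,da_k)=\lcm(d,d^2,\ldots,d^{k+1})=d^{k+1},$$
since every $d^{i+1}$ divides $d^{k+1}$ and $d^{k+1}$ itself occurs in the list. Consequently $\frac{D(d)}{da_i}=\frac{d^{k+1}}{d^{i+1}}=d^{k-i}$, so the summation index $j_i$ ranges over $0\le j_i\le d^{k-i}-1$; in particular, for $i=k$ this forces $j_k=0$, which is exactly why the product and the congruence in the statement of the corollary only involve $j_0,\ldots,j_{k-1}$. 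Also $(r-1)!=k!$.

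Next I would rewrite the linear form occurring in Theorem \ref{t1}. For $\varepsilon=(\varepsilon_0,\ldots,\varepsilon_k)\in\{0,1\}^{k+1}$ and admissible $(j_i)$,
$$\sum_{i=0}^{k}a_i(dj_i+\varepsilon_i)=\sum_{i=0}^{k}d^{i+1}j_i+\sum_{i=0}^{k}d^{i}\varepsilon_i=\sum_{i=0}^{k-1}d^{i+1}j_i+\sum_{i=0}^{k}d^{i}\varepsilon_i,$$
using $j_k=0$. Substituting this identity, together with $D(d)=d^{k+1}$ and $(r-1)!=k!$, into the two displayed formulas of Theorem \ref{t1} gives precisely the asserted expressions for $\pdd(n)$ and its polynomial part $\Pdd(n)$.

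I do not expect a genuine obstacle here: the argument is a direct substitution into Theorem \ref{t1}. The only things to be careful about are the index bookkeeping — that the $r=k+1$ components of $\mathbf d$ are indexed from $0$ to $k$, and that the $i=k$ term contributes a forced zero rather than an extra summation variable — and the observation that the hypothesis $0\le k\le\lfloor\log_d n\rfloor$, inherited from the definition of $\mathbf d$ preceding the corollary, plays no role in the algebra; it merely guarantees $d^k\le n$ so that $\mathbf d$ is the intended sequence, and the stated quasi-polynomial identity holds throughout this range.
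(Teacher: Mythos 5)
Your proposal is correct and matches the paper's own (implicit) derivation, which simply specializes Theorem \ref{t1} to $\mathbf a=(1,d,\ldots,d^k)$; the key observations — $D(d)=d^{k+1}$, $(r-1)!=k!$, and the forced value $j_k=0$ explaining why only $j_0,\ldots,j_{k-1}$ appear — are exactly the bookkeeping needed. Nothing further is required.
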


\begin{exm}\label{exm1}\rm
Let $n=10$, $d=3$ and $k=1$. Since $10 \equiv 1 (\bmod\;9)$, according to Corollary \ref{c1}, we have that
\begin{align*}
\mathbf p_{(1,3),3}(10) &=  \frac{1}{1!} \sum\limits_{\substack{ 0\leq \varepsilon_0 \leq 1,\; 0\leq \varepsilon_1 \leq 1,\; 0\leq j_0 \leq 2	     \\  3(j_0+\varepsilon_1) + \varepsilon_0  \equiv 1 (\bmod 9)}}			     \left(\frac{10-3(j_0+\varepsilon_1) - \varepsilon_0}{9}+1 \right) = \\
			&= \frac{1}{9} \sum\limits_{\substack{ 0\leq \varepsilon_0 \leq 1,\; 0\leq \varepsilon_1 \leq 1,\; 0\leq j_0 \leq 2
	     \\  3(j_0+\varepsilon_1) + \varepsilon_0  \equiv 1 (\bmod 9)}} (19-3(j_0+\varepsilon_1) - \varepsilon_0).
\end{align*}

If $\varepsilon_0=0$, then $3(j_0+\varepsilon_1) + \varepsilon_0 = 3(j_0+\varepsilon_1) \not\equiv 1 (\bmod\;9)$.
If $\varepsilon_0=1$, then $3(j_0+\varepsilon_1) + \varepsilon_0 \equiv 1 (\bmod\; 9)$ if and only if $j_0+\varepsilon_1 \equiv 0 (\bmod\; 3)$.

Since $j_0\in\{0,1,2\}$ and $\varepsilon_1\in \{0,1\}$, it follows that $j_0+\varepsilon_1 \equiv 0 (\bmod\; 3)$ if and only if
$(j_0,\varepsilon_1)\in\{(0,0),\;(2,1)\}$. Therefore
$$ \mathbf p_{(1,3),3}(10) = \frac{1}{9}( (19-3\cdot 0 - 1) + (19- 3\cdot 3 - 1) ) = \frac{18+9}{9} = 3.$$
Indeed, we can write $10=x_1\cdot 1 + x_2\cdot 3$, with $x_i\equiv 0,1(\bmod 3)$, in three ways, that is
$$10 = 10\cdot 1  + 0 \cdot 3 = 7\cdot 1 + 1 \cdot 3 = 1\cdot 1 + 3 \cdot 3.$$ 
Note that the partition $10 = 4\cdot 1 + 2\cdot 3$ does not satisfy $2\equiv 0,1(\bmod\; 3)$.

Also, from Corollary \ref{c1}, it follows that
$$P_{(1,3),3}(10) = \frac{1}{81} \sum\limits_{0\leq \varepsilon_0 \leq 1,\; 0\leq \varepsilon_1 \leq 1,\; 0\leq j_0 \leq 2} 
                   (19-3(j_0+\varepsilon_1) - \varepsilon_0).$$
By straightforward computations, $P_{(1,3),3}(10) = \frac{1}{81}(12 \cdot 19 - 12\cdot 3 - 18 - 6)= \frac{168}{81}$.
\end{exm}

Now, we return to our general setting.
Let $\mathbf a := (a_1, a_2, \ldots , a_r)$ be a sequence of positive integers, where $r \geq 2$. We assume that $a_1<a_2<\cdots<a_r$.
Let $n\geq 1$ and $j\geq 0$ be some integers. We define
$$ p_{\mathbf a}(n;j):= \left| \{ (x_1,\ldots,x_r)\;:\;x_i\geq 0,\; a_1x_1+\cdots+a_rx_r =n,\; x_1+\cdots+x_r=j\} \right|.$$
Since $x_1=j-x_2-\cdots-x_r$, it follows that 
\begin{equation} \label{pnj}
\begin{split}
p_{\mathbf a}(n;j) &= \left| \{ (x_2,\ldots,x_r)\;:\;x_i\geq 0,\; (a_2-a_1)x_2 + \cdots + (a_r-a_1)x_r = n-a_1 \} \right| \\
 &=  p_{(a_2-a_1,\ldots,a_r-a_1)}(n-a_1j).
\end{split}
\end{equation}
Let $D'$ be the least common multiple of $a_2-a_1,\ldots, a_r-a_1$.

\begin{prop}\label{proco}
With the above notations, we have that:
\begin{align*}
p_{\mathbf a}(n;j) =& \frac{1}{(r-2)!} \sum_{\substack{0\leq j_2\leq \frac{D'}{a_2-a_1}-1,\ldots, 0\leq j_r\leq \frac{D'}{a_r-a_1}-1 \\ 
a_1j+(a_2-a_1)j_2+\cdots+(a_r-a_1)j_r \equiv n (\bmod D')}}\\ \prod_{\ell=1}^{r-2} 
& \left(\frac{n-a_1j - (a_2-a_1)j_2 - \cdots - (a_r-a_1)j_r}{D'}+\ell \right) 
\end{align*}
\end{prop}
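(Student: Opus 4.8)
The plan is to obtain the formula directly from the reduction \eqref{pnj} combined with Theorem \ref{formula}. By \eqref{pnj} we have $p_{\mathbf a}(n;j) = p_{\mathbf b}(n - a_1 j)$, where $\mathbf b := (a_2 - a_1, a_3 - a_1, \ldots, a_r - a_1)$. The assumption $a_1 < a_2 < \cdots < a_r$ guarantees that $\mathbf b$ is a sequence of $r-1$ positive integers, and $D'$ is by definition a common multiple (in fact the least common multiple) of its components $a_2 - a_1, \ldots, a_r - a_1$, so Theorem \ref{formula} is applicable to $\mathbf b$.

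Next I would apply Theorem \ref{formula} to the sequence $\mathbf b$ --- with ``$r$'' there replaced by $r-1$ and ``$D$'' by $D'$ --- evaluated at the argument $m := n - a_1 j$. This yields
$$p_{\mathbf b}(m) = \frac{1}{(r-2)!} \sum_{\substack{0 \le j_i \le \frac{D'}{a_i - a_1} - 1,\ 2 \le i \le r \\ \sum_{i=2}^r (a_i - a_1) j_i \equiv m\ (\bmod D')}} \prod_{\ell=1}^{r-2} \left( \frac{m - \sum_{i=2}^r (a_i - a_1) j_i}{D'} + \ell \right).$$
It then only remains to substitute $m = n - a_1 j$: since $m \equiv n - a_1 j\ (\bmod D')$, the congruence $\sum_{i=2}^r (a_i - a_1) j_i \equiv m\ (\bmod D')$ becomes $a_1 j + \sum_{i=2}^r (a_i - a_1) j_i \equiv n\ (\bmod D')$, and the numerator $m - \sum_{i=2}^r (a_i - a_1) j_i$ becomes $n - a_1 j - \sum_{i=2}^r (a_i - a_1) j_i$. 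This is exactly the expression claimed in the proposition.

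The one point deserving a separate word is the degenerate case $r = 2$, in which $\mathbf b = (a_2 - a_1)$ has a single component and Theorem \ref{formula} (stated for sequences of length at least $2$) does not literally apply. In that case $p_{(a_2 - a_1)}(n - a_1 j)$ equals $1$ if $(a_2 - a_1) \mid (n - a_1 j)$ and $0$ otherwise, while the right-hand side of the proposition reduces to an empty product equal to $1$, a single summation index $j_2 = 0$, and the congruence $a_1 j \equiv n\ (\bmod a_2 - a_1)$ --- hence to the same value; so the formula holds here as well. I do not expect any genuine obstacle: all the content lies in \eqref{pnj} and in Theorem \ref{formula}, and the remaining work is purely the bookkeeping of the summation ranges and the rewriting of the congruence condition.
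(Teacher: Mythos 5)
Your proof is correct and takes essentially the same route as the paper's: the paper likewise obtains the proposition by combining the reduction in Equation \eqref{pnj} with Theorem \ref{formula} applied to the difference sequence $(a_2-a_1,\ldots,a_r-a_1)$ with modulus $D'$. Your explicit check of the degenerate case $r=2$ is extra bookkeeping that the paper leaves implicit.
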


\begin{proof}
It follows from Theorem \ref{formula} and Equation \eqref{pnj}.
\end{proof}

In the following, we generalize the above construction. Let $d\geq 2$ be an integer. We define
\begin{equation}\label{padnj}
\begin{split}
 p_{\mathbf a,d}(n;j):= & \left| \{ (x_1,\ldots,x_r)\;:\;\sum_{i=1}^r a_ix_i =n,\; 
                        \sum_{i=1}^r \left(x_i-(d-2)\left\lfloor \frac{x_i}{d} \right\rfloor\right) =j, \right.\\
												& \left.  x_i\geq 0\text{ and }x_i\equiv 0,1(\bmod d) \} \right|.
\end{split}
\end{equation}
We write $x_i=dq_i+\varepsilon_i$, where $\varepsilon_i\in \{0,1\}$, for all $1\leq i\leq r$. 

We fix $\varepsilon\in \{0,1\}^r$ and assume that $|\varepsilon|:=\sum\limits_{i=1}^r \varepsilon_i \equiv j(\bmod 2)$.
Note that $$x_i-(d-2)\left\lfloor \frac{x_i}{d} \right\rfloor = 2q_i+\varepsilon_i,\text{ for all }1\leq i\leq r.$$
Hence, $\sum\limits_{i=1}^r \left(x_i-(d-2)\left\lfloor \frac{x_i}{d} \right\rfloor\right) =j$ is equivalent to
$\sum\limits_{i=1}^r (2q_i + \varepsilon_i) = j$, which implies 
$q_1 = \frac{j-|\varepsilon|}{2} + \sum\limits_{i=2}^r q_i$.
From $\sum\limits_{i=1}^r a_ix_i =n$ it follows that
$$\sum_{i=2}^r a_i(dq_i + \varepsilon_i) = n - a_1(dq_1+\varepsilon_1) = n - a_1d\left( \frac{j-|\varepsilon|}{2} + \sum\limits_{i=2}^r q_i \right) - a_1\varepsilon_1,$$
which implies that $$\sum_{i=2}^r d(a_i-a_1) q_i = n - \frac{a_1d(j-|\varepsilon|)}{2} - \sum_{i=1}^r a_i\varepsilon_i.$$
From Equation \eqref{padnj}, and the above considerations, it follows that
\begin{equation}\label{forma}
 p_{\mathbf a,d}(n;j) = \sum_{\substack{\varepsilon\in\{0,1\}^r \\ |\varepsilon|\equiv j(\bmod 2)}} 
                          p_{(d(a_2-a_1),\ldots,d(a_r-a_1))}(n - \frac{a_1d(j-|\varepsilon|)}{2} - \sum_{i=1}^r a_i\varepsilon_i ).
\end{equation}
Let $D'(d):=\lcm(d(a_2-a_1),\ldots,d(a_r-a_1))$.

\begin{teor}\label{t11}
With the above notations, we have that
\begin{align*}
p_{\mathbf a,d}(n;j) = & \frac{1}{(r-2)!} \sum_{\substack{\varepsilon\in\{0,1\}^r \\ |\varepsilon|\equiv j(\bmod 2)}} 
       \sum_{\substack{0\leq j_2\leq \frac{D'(d)}{d(a_2-a_1)}-1,\ldots, 0\leq j_r\leq \frac{D'(d)}{d(a_r-a_1)}-1 \\ 
d(a_2-a_1)j_2+\cdots+d(a_r-a_1)j_r \equiv n - \frac{a_1d(j-|\varepsilon|)}{2} - \sum\limits_{i=1}^r a_i\varepsilon_i  (\bmod D'(d))}} \\
        & \prod_{\ell=1}^{r-2} \left(\frac{n-\frac{a_1d(j-|\varepsilon|)}{2} - \sum\limits_{i=1}^r a_i\varepsilon_i
				-d(a_2-a_1)j_2-\cdots-d(a_r-a_1)}{D'(d)}+\ell \right).
\end{align*}
\end{teor}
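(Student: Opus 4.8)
The plan is to combine the reduction formula \eqref{forma} with Theorem \ref{formula}, in exactly the way Proposition \ref{proco} follows from \eqref{pnj}; the genuinely combinatorial content has already been extracted in establishing \eqref{forma}, so what is left is a substitution together with some bookkeeping of indices. Recall that, writing each $x_i=dq_i+\varepsilon_i$ with $\varepsilon_i\in\{0,1\}$ and $q_i\geq 0$, the defining conditions of $p_{\mathbf a,d}(n;j)$ force $|\varepsilon|\equiv j\pmod 2$, determine $q_1$ from $q_2,\dots,q_r$, and turn the remaining freedom into an ordinary restricted partition problem for the $(r-1)$-tuple $\mathbf b:=(d(a_2-a_1),\dots,d(a_r-a_1))$. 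This is the content of \eqref{forma}:
\[
p_{\mathbf a,d}(n;j)=\sum_{\substack{\varepsilon\in\{0,1\}^r\\ |\varepsilon|\equiv j(\bmod 2)}}p_{\mathbf b}(m_\varepsilon),\qquad m_\varepsilon:=n-\tfrac{a_1 d(j-|\varepsilon|)}{2}-\sum_{i=1}^r a_i\varepsilon_i.
\]

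First I would fix a vector $\varepsilon\in\{0,1\}^r$ with $|\varepsilon|\equiv j\pmod 2$ and apply Theorem \ref{formula} to the single value $p_{\mathbf b}(m_\varepsilon)$. Since, by definition, $D'(d)=\lcm(d(a_2-a_1),\dots,d(a_r-a_1))$ is a common multiple of the entries of $\mathbf b$, Theorem \ref{formula} applies verbatim, with $r$ replaced by $r-1$, the normalizing factor equal to $\frac{1}{(r-2)!}$, the modulus replaced by $D'(d)$, and summation indices $j_2,\dots,j_r$ in the ranges $0\leq j_i\leq \frac{D'(d)}{d(a_i-a_1)}-1$ subject to $\sum_{i=2}^r d(a_i-a_1)j_i\equiv m_\varepsilon\pmod{D'(d)}$, the $\ell$-product running from $1$ to $r-2$. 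I would then substitute the explicit value of $m_\varepsilon$ into both the congruence and the product, and finally sum over all admissible $\varepsilon$; since the outer sum is already indexed by vectors rather than by subsets, merging it with the inner sum over $(j_2,\dots,j_r)$ reproduces verbatim the double sum in the statement.

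I do not expect a serious obstacle; the only points requiring attention are formal. One must transcribe the ranges $0\leq j_i\leq \frac{D'(d)}{d(a_i-a_1)}-1$ and the congruence modulo $D'(d)$ without an off-by-one or a sign slip when returning from $p_{\mathbf b}$ to $p_{\mathbf a,d}$. One should separately record the degenerate case $r=2$, in which $\mathbf b$ has a single entry, the empty product $\prod_{\ell=1}^{0}$ equals $1$, and the formula collapses to a count of residue classes, in agreement with Proposition \ref{proco} at $d=2$. Finally, the identity is to be read with the standard convention that a summand of \eqref{forma} with $m_\varepsilon<0$ contributes $p_{\mathbf b}(m_\varepsilon)=0$, so that Theorem \ref{formula} is invoked only at nonnegative arguments; this is the sole place where a mild hypothesis on the range of $n$ and $j$ is needed for the stated closed formula to hold literally.
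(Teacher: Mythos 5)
Your proposal is correct and follows the paper's argument exactly: the paper derives the reduction \eqref{forma} in the text preceding the theorem and then proves Theorem \ref{t11} by applying Theorem \ref{formula} to each summand $p_{(d(a_2-a_1),\ldots,d(a_r-a_1))}(m_\varepsilon)$ with common multiple $D'(d)$, which is precisely the substitution you describe. Your additional remarks on the degenerate case $r=2$ and the convention for negative arguments are sensible bookkeeping that the paper leaves implicit.
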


\begin{proof}
It follows from Theorem \ref{formula} and Equation \eqref{forma}.
\end{proof}

Let $n\geq 1$, $j\geq 0$ and $\mathbf d=(1,d,\ldots,d^k)$, where $0\leq k\leq \left\lfloor \log_d(n)\right\rfloor$ is fixed.
Let $D':=\lcm(d-1,d^2-1,\ldots,d^k-1)$. From Theorem \ref{t11} we deduce:

\begin{cor}\label{c11}
With the above notations, we have that
\begin{align*}
p_{\mathbf d,d}(n;j) = & \frac{1}{(k-1)!} \sum_{\substack{\varepsilon\in\{0,1\}^{k+1} \\ |\varepsilon|\equiv j(\bmod 2)}} 
       \sum_{\substack{0\leq j_1\leq \frac{D'}{d-1}-1,\ldots, 0\leq j_r\leq \frac{D'}{d^k-1}-1 \\ 
d(d-1)j_1+\cdots+d(d^k-1)j_k \equiv n - \frac{d(j-|\varepsilon|)}{2} - \sum\limits_{i=0}^k d^i \varepsilon_i  (\bmod dD')}} \\
        & \prod_{\ell=1}^{k-1} \left(\frac{n-\frac{d(j-|\varepsilon|)}{2} - \sum\limits_{i=0}^k d^i \varepsilon_i
				-d(d-1)j_1-\cdots-d(d^k-1)j_k}{dD'}+\ell \right).
\end{align*}
\end{cor}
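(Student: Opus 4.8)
The plan is to derive Corollary \ref{c11} by specializing Theorem \ref{t11} to the sequence $\mathbf d=(1,d,\ldots,d^k)$, which has $r=k+1$ components. First I would identify the data appearing in Theorem \ref{t11}: the first component is $a_1=1$, and for $2\leq i\leq r$ we have $a_i=d^{i-1}$, so that $a_i-a_1=d^{i-1}-1$. Reindexing the summation variables $j_2,\ldots,j_r$ as $j_1,\ldots,j_k$ (shifting the index down by one so that $j_i$ corresponds to the exponent $d^i$) turns the differences $a_i-a_1$ into $d^1-1,d^2-1,\ldots,d^k-1$, and the associated moduli $d(a_i-a_1)$ into $d(d-1),\ldots,d(d^k-1)$. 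Similarly the coefficient vector $\varepsilon\in\{0,1\}^r$ becomes $\varepsilon=(\varepsilon_0,\ldots,\varepsilon_k)\in\{0,1\}^{k+1}$, and the inner sum $\sum_{i=1}^r a_i\varepsilon_i$ becomes $\sum_{i=0}^k d^i\varepsilon_i$, while $a_1=1$ gives $\tfrac{a_1 d(j-|\varepsilon|)}{2}=\tfrac{d(j-|\varepsilon|)}{2}$.

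Next I would handle the least common multiple. In Theorem \ref{t11} the relevant modulus is $D'(d)=\lcm(d(a_2-a_1),\ldots,d(a_r-a_1))$; for our sequence this is $\lcm(d(d-1),d(d^2-1),\ldots,d(d^k-1))=d\cdot\lcm(d-1,d^2-1,\ldots,d^k-1)=dD'$, using the notation $D'=\lcm(d-1,\ldots,d^k-1)$ introduced just before the corollary, together with the elementary fact that $\lcm(dm_1,\ldots,dm_k)=d\,\lcm(m_1,\ldots,m_k)$. Consequently the upper bounds $\tfrac{D'(d)}{d(a_i-a_1)}-1$ become $\tfrac{dD'}{d(d^i-1)}-1=\tfrac{D'}{d^i-1}-1$, exactly as in the statement, and every occurrence of $D'(d)$ in the congruence condition and in the denominator of the product is replaced by $dD'$. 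Finally, the prefactor $\tfrac{1}{(r-2)!}$ becomes $\tfrac{1}{(k-1)!}$ and the product $\prod_{\ell=1}^{r-2}$ becomes $\prod_{\ell=1}^{k-1}$.

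Assembling these substitutions into the formula of Theorem \ref{t11} yields precisely the asserted expression for $p_{\mathbf d,d}(n;j)$. I do not expect any genuine obstacle here: the result is a direct specialization, and the only point requiring a word of justification is the distributivity of $\lcm$ over the common factor $d$, which is immediate from unique factorization. One should also note the hypothesis $k\leq\lfloor\log_d(n)\rfloor$ guarantees $n\geq d^k$, so that all the components $1,d,\ldots,d^k$ are at most $n$ and the sequence is a legitimate choice of $\mathbf a$; and the tacit assumption $k\geq 2$ (equivalently $r\geq 2$ with strict inequalities $a_1<\cdots<a_r$) is what makes $(k-1)!$ and the empty-or-nonempty product well defined, matching the standing hypotheses of Theorem \ref{t11}.

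\begin{proof}
Apply Theorem \ref{t11} to the sequence $\mathbf a=\mathbf d=(1,d,\ldots,d^k)$, which has $r=k+1$ components with $a_1=1$ and $a_{i}=d^{i-1}$ for $2\leq i\leq r$, so that $a_i-a_1=d^{i-1}-1$. Reindex the variables $j_2,\ldots,j_r$ as $j_1,\ldots,j_k$ by lowering the index by one, and write $\varepsilon=(\varepsilon_0,\ldots,\varepsilon_k)\in\{0,1\}^{k+1}$; then $\sum_{i=1}^r a_i\varepsilon_i=\sum_{i=0}^k d^i\varepsilon_i$ and $\tfrac{a_1 d(j-|\varepsilon|)}{2}=\tfrac{d(j-|\varepsilon|)}{2}$. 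Moreover
$$D'(d)=\lcm\bigl(d(d-1),d(d^2-1),\ldots,d(d^k-1)\bigr)=d\cdot\lcm(d-1,d^2-1,\ldots,d^k-1)=dD',$$
since $\lcm(dm_1,\ldots,dm_k)=d\,\lcm(m_1,\ldots,m_k)$. Hence $\tfrac{D'(d)}{d(a_i-a_1)}-1=\tfrac{D'}{d^i-1}-1$, the modulus $D'(d)$ is replaced throughout by $dD'$, the prefactor $\tfrac{1}{(r-2)!}$ becomes $\tfrac{1}{(k-1)!}$, and the product $\prod_{\ell=1}^{r-2}$ becomes $\prod_{\ell=1}^{k-1}$. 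Substituting these data into the formula of Theorem \ref{t11} gives exactly the claimed expression for $p_{\mathbf d,d}(n;j)$.
\end{proof}
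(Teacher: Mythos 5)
Your proposal is correct and follows exactly the route the paper intends: the paper offers no written proof for Corollary \ref{c11} beyond the phrase ``From Theorem \ref{t11} we deduce,'' and your specialization to $\mathbf a=(1,d,\ldots,d^k)$ with $r=k+1$, the identity $D'(d)=dD'$, and the reindexing of $\varepsilon$ and the $j_i$ is precisely that deduction carried out in detail.
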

													
\section{An application to the computation of the stable sheaf cohomology on flag varieties}

We briefly recall the set up and the construction from \cite{raicu}, with some slight change in notations.
Let $K$ be an algebraically closed field of characteristic $p>0$. 
We denote $\Fl_m$ to be the flag variety which parametrize complete flags of subspaces
$$0\subset V_1\subset V_2\subset \cdots \subset V_{m-1} \subset K^m,$$
where $\dim(V_i)=i$, for all $1\leq i\leq m$. It is well known that $\Fl_m$ can be identified as $\Fl_m=\Gl_m/B_m$,
where $\Gl_m$ is the group of $m\times m$ invertible matrices and $B_m$ is the Borel subgroup of $m\times m$ 
upper triangular matrices.

We write $\mathcal O(\lambda):=\mathcal O_{\Fl_m}(\lambda)$ for the line bundle corresponding to $\lambda\in\mathbb Z^m$ and we denote by
$$H^j(\lambda):=H^j(\Fl_d,\mathcal O_{\Fl_m}(\lambda)),\;j\geq 0,$$ its cohomology groups, which are representations of $\Gl_m$. If 
$|\lambda|=\lambda_1+\cdots+\lambda_m=0$, we denote $H^j_{st}(\lambda):=H^j(\lambda)$, $j\geq 0$, and we refer to it as the stable 
cohomology of $\mathcal O(\lambda)$.
We denote by $$h^j_{st}(\lambda)=\dim(H^j_{st}(\lambda)),\;j\geq 0,$$
its dimensions. Also, we denote $$h_{st}(\lambda)=\sum_{j\geq 0}h^j_{st}(\lambda).$$
Let $n$ be a nonnegative integer. Let $k:=\lfloor \log_p(n) \rfloor$. We denote
\begin{equation}\label{apn}
\mathcal A_{p,n}:=\{a=(a_0,a_1,\ldots,a_k)\;:\;\sum_{i=0}^k a_i p^i = n,\;a_i\geq 0,\;a_i\equiv 0,1(\bmod p)\}.
\end{equation}
We consider the map 
$$\Phi_{p,k}:\mathbb N^{k+1} \to \mathbb N,\;\Phi_{p,k}(a_0,a_1,\ldots,a_k):=\sum_{i=0}^k \left( a_i - (p-2) \left\lfloor \frac{a_i}{p} \right\rfloor \right).$$
Note that $\Phi_{2,k}(a_0,a_1,\ldots,a_k)=\sum_{i=0}^k a_i$, for all $k\geq 0$.

According to Equation (1.4) from \cite{raicu}, we have that
\begin{equation}\label{hjn}
 \sum_{j\geq 0} h^j_{st}(-n,n)t^j = \sum_{(a_0,\ldots,a_k)\in \mathcal A_{p,n}} t^{\Phi_{p,k}(a_0,a_1,\ldots,a_k)}.
\end{equation}
Now, we are able to prove the following result:

\begin{teor}\label{t2}
With the above notations, we have that
\begin{align*}
h_{st}^j(-n,n)  = & \frac{1}{(k-1)!} \sum_{\substack{\varepsilon\in\{0,1\}^{k+1} \\ |\varepsilon|\equiv j(\bmod 2)}} 
       \sum_{\substack{0\leq j_1\leq \frac{D'}{p-1}-1,\ldots, 0\leq j_k\leq \frac{D'}{p^k-1}-1 \\ 
p(p-1)j_1+\cdots+p(p^k-1)j_k \equiv n - \frac{p(j-|\varepsilon|)}{2} - \sum\limits_{i=0}^k p^i \varepsilon_i  (\bmod pD')}} \\
        & \prod_{\ell=1}^{k-1} \left(\frac{n-\frac{p(j-|\varepsilon|)}{2} - \sum\limits_{i=0}^k p^i \varepsilon_i
				-p(p-1)j_1-\cdots-p(p^k-1)j_k}{pD'}+\ell \right),
\end{align*}
where $\varepsilon = (\varepsilon_0,\varepsilon_1,\ldots,\varepsilon_k)$ and $D'=\lcm(p-1,p^2-1,\ldots,p^k-1)$. Moreover
\begin{align*}
h_{st}(-n,n) =  \frac{1}{k!} 
  \sum_{ \substack{\varepsilon \in \{0,1\}^{k+1},\; 0\leq j_i \leq p^{k-i}-1,\; 0\leq i\leq k-1, 
	     \\ \sum\limits_{i=1}^{k} p^{i}j_{i-1} + \sum\limits_{i=0}^{k} p^{i}\varepsilon_i  \equiv n (\bmod p^k) }} 
			 \prod_{\ell=1}^{k} \left(\frac{n-\sum\limits_{i=1}^{k} p^{i}j_{i-1} - \sum\limits_{i=0}^{k} p^{i}\varepsilon_i}{p^k}+\ell \right).
\end{align*}
\end{teor}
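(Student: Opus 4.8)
The approach is to translate the generating-function identity \eqref{hjn} into the language of Section 2 and then quote the two corollaries proved there. The key point is that the set $\mathcal A_{p,n}$ and the statistic $\Phi_{p,k}$ are precisely the data defining the functions $p_{\mathbf d,p}(n)$ and $p_{\mathbf d,p}(n;j)$ for the sequence $\mathbf d=(1,p,p^2,\ldots,p^k)$ and modulus $d=p$.

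First I would make this identification explicit. Since $k=\lfloor\log_p(n)\rfloor$, we have $p^{k+1}>n$, so every representation $\sum_{i\ge 0}a_ip^i=n$ in nonnegative integers has $a_i=0$ for $i>k$; hence $\mathcal A_{p,n}$ of \eqref{apn} is exactly the set of $(a_0,\ldots,a_k)$ with $\sum_{i=0}^k a_ip^i=n$, $a_i\ge 0$, and $a_i\equiv 0,1\ (\bmod\ p)$. Taking $\mathbf a=\mathbf d=(1,p,\ldots,p^k)$ (so $r=k+1\ge 2$ and $1<p<\cdots<p^k$, as the definition of $p_{\mathbf a,d}(n;j)$ requires) and $d=p$, the conditions in \eqref{padnj} become precisely ``$(a_0,\ldots,a_k)\in\mathcal A_{p,n}$'' together with $\Phi_{p,k}(a_0,\ldots,a_k)=\sum_{i=0}^k(a_i-(p-2)\lfloor a_i/p\rfloor)=j$. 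Comparing coefficients of $t^j$ in \eqref{hjn} then yields
\[ h^j_{st}(-n,n)=p_{\mathbf d,p}(n;j)\ \ (j\ge 0), \qquad h_{st}(-n,n)=\sum_{j\ge 0}p_{\mathbf d,p}(n;j)=p_{\mathbf d,p}(n), \]
the last equality because each tuple counted by $p_{\mathbf d,p}(n)$ has a single well-defined value of $\Phi_{p,k}$.

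Given these equalities, the first displayed formula of the theorem is Corollary \ref{c11} with $d$ replaced by $p$ (there $D'=\lcm(p-1,p^2-1,\ldots,p^k-1)$ is literally the $D'$ of that statement), and the second displayed formula is Corollary \ref{c1} with $d$ replaced by $p$. Thus the substantive content is the dictionary of the previous paragraph, followed by two citations.

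I do not expect a genuine obstacle. The only care needed is routine index bookkeeping — the components are indexed from $0$ to $k$, there are $r=k+1$ of them, and the forced top summation variable is already absorbed in the statements of Corollaries \ref{c1} and \ref{c11} — together with the implicit hypothesis $k\ge 1$ (the excluded range $n\le 1$, where $k=0$ and $\mathcal A_{p,n}=\{(n)\}$, being trivial).
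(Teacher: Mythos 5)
Your proposal is correct and follows essentially the same route as the paper: use \eqref{hjn} to identify $h^j_{st}(-n,n)$ with $p_{(1,p,\ldots,p^k),p}(n;j)$ and $h_{st}(-n,n)=|\mathcal A_{p,n}|$ with $\mathbf p_{(1,p,\ldots,p^k),p}(n)$, then invoke Corollaries \ref{c11} and \ref{c1}. Your version merely spells out the dictionary (and the $k\geq 1$ caveat) a bit more explicitly than the paper does.
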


\begin{proof}
From \eqref{hjn} it is easy to see that
$$h_{st}^j(-n,n) = p_{(1,p,\ldots,p^k),p}(n;j),$$
Hence, the first formula follows from Corollary \ref{c11}. Taking $t=1$ in Equation \eqref{hjn}, we get
$$h_{st}(-n,n) = \sum_{j\geq 0}h^j_{st}(-n,n) =|\mathcal A_{p,n}|.$$
On the other hand, comparing Equation \eqref{d1} and Equation \eqref{apn}, we note that
$$ |\mathcal A_{p,n}| = \mathbf p_{(1,p,\ldots,p^k),p}(n).$$
Hence, the last formula follows from Corollary \ref{c1}.
\end{proof}

\begin{exm}\label{exm2}
Let $p=3$ and $n=9$. We have $k=\lfloor \log_3 9 \rfloor=2$. According to Theorem \ref{t2}, we have
$$ h_{st}(-9,9) = \frac{1}{2!} \sum_{ \substack{\varepsilon \in \{0,1\}^{3},\; 0\leq j_0 \leq 8,\; 0\leq j_1\leq 2 \\
                   3 j_0 + 9 j_1 + \varepsilon_0 + 3\varepsilon_1 + 9\varepsilon_2 \equiv 9 (\bmod 27) }} 
			 \prod_{\ell=1}^{2} \left(\frac{9-3 j_0 - 9 j_1 - \varepsilon_0 - 3\varepsilon_1 - 9\varepsilon_2}{27}+\ell \right).$$
Note that, in the above sum, it suffices to takes the terms for which $3 j_0 + 9 j_1 + \varepsilon_0 + 3\varepsilon_1 + 9\varepsilon_2=9$.
This implies $\varepsilon_0=0$ and $j_0+\varepsilon_1+3j_1+3\varepsilon_2=3$. It is easy to see that in the range of $\varepsilon_1,\varepsilon_2,j_0$
and $j_1$, we have exactly four solutions. Hence $ h_{st}(-9,9) = \frac{1}{2} 4\cdot 1\cdot 2 = 4$.

Now, let $j=2$. We have $D'=\lcm(3-1,3^2-1)=8$. According to Theorem \ref{t2}, it follows that
$$h_{st}^2(-9,9) = \sum_{\substack{\varepsilon\in\{0,1\}^{3} \\ |\varepsilon|\equiv 0(\bmod 2)}}
       \sum_{\substack{0\leq j_1\leq 3 \\ 
              6j_1 \equiv 9 - \frac{3(2-|\varepsilon|)}{2} - \sum\limits_{i=0}^2 3^i \varepsilon_i  (\bmod 24)}} 
         \left(\frac{9 - \frac{3(2-|\varepsilon|)}{2} - \sum\limits_{i=0}^2 3^i \varepsilon_i - 6j_1}{24}+1 \right)$$
Since $0\leq \frac{3(2-|\varepsilon|)}{2} + \sum\limits_{i=0}^2 3^i \varepsilon_i + 6j_1 \leq 34$, in the above sum, we must 
have $\frac{3(2-|\varepsilon|)}{2} + \sum\limits_{i=0}^2 3^i \varepsilon_i + 6j_1=9$.
If $j_1=0$, then $\frac{3(2-|\varepsilon|)}{2} + \sum\limits_{i=0}^2 3^i=9$. This implies $|\varepsilon|=2$, that is 
$\varepsilon\in\{(1,1,0),(1,0,1),(0,1,1)\}$. Each of these cases led to a contradiction.

If $j_1=1$, then $\frac{3(2-|\varepsilon|)}{2} + \sum\limits_{i=0}^2 3^i=3$. This condition is satisfied if and only if $\varepsilon=(0,0,0)$.
Consequently, we get $h_{st}^2(-9,9)=1$. Similarly, we have $h_{st}^1(-9,9)=h_{st}^5(-9,9)=h_{st}^6(-9,9)=1$.
See also \cite[Example 1.2]{raicu}.
\end{exm}

An interesting particular case is $p=2$.

\begin{teor}\label{t3}
If $p=2$, then, for all $0\leq j\leq n$, we have that
\begin{align*}
h_{st}^j(-n,n) =&  \frac{1}{(k-1)!} \sum_{\substack{0\leq j_1\leq \frac{D'}{2-1}-1,\ldots, 0\leq j_k\leq \frac{D'}{2^k-1}-1 \\ 
j+(2-1)j_1+\cdots+(2^k-1)j_k \equiv n (\bmod D')}}  \times \\
  & \times \prod_{\ell=1}^{k-1} \left(\frac{n-j-(2-1)j_1-\cdots-(2^k-1)j_k}{D'}+\ell \right),
\end{align*}
where $D'=\lcm(2^1-1,2^2-1,\ldots,2^k-1)$. Moreover, we have that \small
$$h_{st}(-n,n)= \frac{1}{k!} \sum_{\substack{ 0\leq j_1\leq 2^k-1,\; 0\leq j_2\leq 2^{k-1}-1,\ldots,0\leq j_k\leq 2-1 \\ 
                                 j_1+2j_2+\cdots+2^{k-1}j_k \equiv n (\bmod 2^k) }} \prod_{\ell=1}^k \left( \frac{n-j_1-2j_2-\cdots-2^{k-1}j_k}{2^k} + \ell \right).$$
\normalsize																
\end{teor}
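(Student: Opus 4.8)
The plan is to obtain Theorem \ref{t3} as the specialization $p=2$ of Theorem \ref{t2}, using the fact already recorded in the excerpt that $\mathbf p_{\mathbf a,2}(n)=\paa(n)$ and the corresponding simplification of $p_{\mathbf a,2}(n;j)$. First I would recall, from the proof of Theorem \ref{t2}, that $h_{st}^j(-n,n)=p_{(1,2,\ldots,2^k),2}(n;j)$ and $h_{st}(-n,n)=\mathbf p_{(1,2,\ldots,2^k),2}(n)$, where $k=\lfloor\log_2 n\rfloor$. So it suffices to substitute $p=2$ into the two displayed formulas of Theorem \ref{t2} and check that they collapse to the asserted ones.

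For the first formula, set $p=2$ in the expression from Corollary \ref{c11} (equivalently, Theorem \ref{t2}). The key simplifications are: $\frac{p(j-|\varepsilon|)}{2}=j-|\varepsilon|$; $\sum_{i=0}^k p^i\varepsilon_i=\sum_{i=0}^k 2^i\varepsilon_i$; and $p\,D'=2D'$ with $D'=\lcm(2^1-1,\ldots,2^k-1)$. Then $n-\frac{p(j-|\varepsilon|)}{2}-\sum_{i=0}^k 2^i\varepsilon_i = n-j+|\varepsilon|-\sum_{i=0}^k 2^i\varepsilon_i = n-j+\sum_{i=0}^k(1-2^i)\varepsilon_i = n-j-\sum_{i=1}^k(2^i-1)\varepsilon_i$, since the $i=0$ term vanishes. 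The subtlety is that in Theorem \ref{t2} the modulus is $pD'=2D'$ while the claimed formula in Theorem \ref{t3} uses modulus $D'$, and the summation variable $j_i$ ranges up to $\frac{D'}{2^i-1}-1$ rather than $\frac{pD'}{\ldots}-1$. This is exactly the point where I would be careful: I would argue that, because every term $d(a_i-a_1)=p(2^i-1)=2(2^i-1)$ appearing in $p_{(d(a_2-a_1),\ldots)}$ has a common factor $d=2$, one may as well apply Theorem \ref{formula} with the smaller common multiple $D'$ instead of $pD'$ — i.e. re-derive \eqref{forma} for $p=2$ directly from $p_{(2^1-1,\ldots,2^k-1)}$ after dividing the defining linear equation by $2$. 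Concretely, when $p=2$ one has $x_i-(d-2)\lfloor x_i/d\rfloor=x_i$, so $p_{\mathbf a,2}(n;j)$ is literally $p_{\mathbf a}(n;j)$ in the sense of \eqref{pnj}, and Proposition \ref{proco} applied to $\mathbf a=(1,2,\ldots,2^k)$ gives directly the first formula with $a_i-a_1=2^i-1$ and $D'=\lcm(2^1-1,\ldots,2^k-1)$, the congruence $j+\sum_{i=1}^k(2^i-1)j_i\equiv n\pmod{D'}$, and the product $\prod_{\ell=1}^{k-1}\bigl(\frac{n-j-\sum_{i=1}^k(2^i-1)j_i}{D'}+\ell\bigr)$. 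That is precisely the claimed expression, so I would present the first half as a one-line deduction from Proposition \ref{proco} rather than from the more cumbersome Theorem \ref{t11}.

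For the second formula, I would similarly invoke Corollary \ref{c1} with $d=2$ and $\mathbf d=(1,2,\ldots,2^k)$, applied to $h_{st}(-n,n)=\mathbf p_{(1,2,\ldots,2^k),2}(n)=\mathbf p_{\mathbf d,2}(n)$. Setting $d=2$ in Corollary \ref{c1}: the outer sum over $\varepsilon\in\{0,1\}^{k+1}$ together with the inner sum over $0\le j_i\le 2^{k-i}-1$ combine, since $\sum_{i=0}^{k-1}2^{i+1}j_i+\sum_{i=0}^k 2^i\varepsilon_i$ ranges (as the excerpt notes for $\mathbf p_{\mathbf a,2}=\paa$) over a complete set accounting for the extra bit; concretely, reindexing $j_{i-1}\mapsto j_i$ and absorbing the $\varepsilon$'s produces summation variables $0\le j_1\le 2^k-1,\ 0\le j_2\le 2^{k-1}-1,\ldots,0\le j_k\le 2-1$ with congruence $\sum_{i=1}^k 2^{i-1}j_i\equiv n\pmod{2^k}$ and product $\prod_{\ell=1}^k\bigl(\frac{n-\sum_{i=1}^k 2^{i-1}j_i}{2^k}+\ell\bigr)$, matching the claim. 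Alternatively — and this is cleaner — I would note that for $p=2$ the set $\mathcal A_{2,n}$ is just the set of binary-type representations $\sum a_i 2^i=n$ with $a_i\equiv 0,1\pmod 2$, i.e. $a_i\in\{0,1,2,3,\ldots\}$ with no parity restriction beyond $0,1\bmod 2$ — in fact $\mathbf p_{\mathbf d,2}(n)=\mathbf p_{\mathbf d}(n)=p_{(1,2,\ldots,2^k)}(n)$, and then the second formula is immediate from Theorem \ref{Pan}/\ref{formula} applied to $\mathbf a=(1,2,\ldots,2^k)$ with $D=2^k$: the quasi-polynomial has period $2^k$, its value at $n$ is the stated sum, and since $k=\lfloor\log_2 n\rfloor$ forces $n<2^{k+1}$ only finitely many terms survive. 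I would spell this out as the main content of the proof.

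The main obstacle, such as it is, is bookkeeping: matching the index ranges and the modulus in Theorem \ref{t2}'s $p=2$ specialization against the sleeker statement of Theorem \ref{t3}, in particular justifying the passage from modulus $pD'=2D'$ to $D'$ (by pulling out the common factor $d=2$ from all the $d(2^i-1)$ and re-applying Theorem \ref{formula}), and verifying the reindexing $j_{i-1}\leftrightarrow j_i$ in the second formula. There is no real mathematical difficulty once one observes that $p=2$ makes $x_i-(d-2)\lfloor x_i/d\rfloor=x_i$ and $\mathbf p_{\mathbf a,2}=\paa$, so both formulas reduce to direct applications of Theorem \ref{formula} (for $h_{st}^j$, via Proposition \ref{proco} with shifts $a_i-a_1=2^i-1$) and Theorem \ref{formula} again (for $h_{st}$, with $\mathbf a=(1,2,\ldots,2^k)$, $D=2^k$).
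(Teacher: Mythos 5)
Your proof is correct, and for the first formula it coincides with the paper's argument: since $p=2$ makes $x_i-(d-2)\lfloor x_i/d\rfloor=x_i$ and $\Phi_{2,k}$ the plain coordinate sum, one has $h^j_{st}(-n,n)=p_{(1,2,\ldots,2^k)}(n;j)$ and Proposition \ref{proco} gives the result (your $a_i-a_1=2^i-1$ should read $2^{i-1}-1$ with the paper's $1$-based indexing, but the set of differences is the same, so nothing changes). For the second formula you diverge from the paper: the paper simply cites \cite[Theorem 3.5]{rox} for the count of $2$-ary partitions, whereas you derive it directly from Theorem \ref{formula} applied to $\mathbf a=(1,2,\ldots,2^k)$ with $D=2^k$, observing that the last variable $j_{k+1}$ is forced to $0$ and drops out; this checks out ($(r-1)!=k!$, ranges $2^{k-i+1}-1$, congruence mod $2^k$) and has the merit of keeping the proof self-contained within the paper's own toolkit. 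One caution: your alternative route (a) via Corollary \ref{c1} with $d=2$ does not collapse to the stated formula by mere reindexing, since there the modulus and denominators are $2^{k+1}$ rather than $2^k$ (both expressions compute the same number, as Theorem \ref{formula} allows any common multiple $D$, but they are not term-by-term identical); since you designate route (b) as the main argument, the proof stands as written.
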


\begin{proof}
Since $\Phi_{2,k}(a_0,a_1,\ldots,a_k)=\sum\limits_{i=0}^k a_i$, from \eqref{hjn} it follows that
 $h_{st}^j(-n,n) = p_{(1,2,\ldots,2^k)}(n;j)$, for all $j\geq 1$. Hence, the conclusion follows from Proposition \ref{proco}. 
The second formula follows \cite[Theorem 3.5]{rox},
since $h_{st}(-n,n)=p_{(1,2,\ldots,2^k)}(n)$, is the number of $2$-ary partitions of $n$.
\end{proof}

\begin{exm}\label{exm3}\rm
Let $n=6$ and $p=2$. We have that $k=\left\lfloor \log_2 n \right\rfloor = 2$. Since $6\equiv 2(\bmod\; 4)$, trom Theorem \ref{t3}, it follows that
$$h_{st}(-6,6)=\frac{1}{32}\sum_{ \substack{0\leq j_1\leq 3,\; 0\leq j_2\leq 1 \\ j_1+2j_2\equiv 2(\bmod 4)}}(10-j_1-2j_2)(14-j_1-2j_2).$$
In the above sum, $(j_1,j_2)\in\{(0,1),(2,0)\}$, and therefore
$h_{st}(-6,6)=\frac{1}{32} ( 8\cdot 12 + 8\cdot 12 ) = 6$.
Let $j=3$. Since $k=2$, we have $D'=\lcm(2-1,2^2-1)=3$. From Theorem \ref{t3} it follows that
$$h^3_{st}(-6,6)= \frac{1}{1!} \sum_{0\leq j_1\leq 2,\; 3+j_1\equiv 6(\bmod 3)} \left( \frac{6-3-j_1}{3}+1 \right).$$
Note that $3+j_1\equiv 6(\bmod\; 3)$ is equivalent to $j_1\equiv 0(\bmod 3)$. Since $0\leq j_1\leq 2$, it follows that $j_1=0$.
Therefore, $h^3_{st}(-6,6)=\frac{6-3}{3}+1 = 2$. Similarly, we can show that $h^2_{st}(-6,6)=h^4_{st}(-6,6)=h^5_{st}(-6,6)=h^6_{st}(-6,6)=1$ and
$h^j_{st}(-6,6)=0$ for $0\leq j\leq 1$ or $j\geq 7$. See also \cite[Example 1.1]{raicu}.
\end{exm}

\end{document}